\documentclass[reqno]{amsart}

\newcommand{\Rm}{\mathbb{R}}
\newcommand{\Cm}{\mathbb{C}}

\newcommand{\Sm}{\mathbb{S}}
\newcommand{\be}{\begin{equation}}
\newcommand{\ee}{\end{equation}}
\newcommand{\ba}{\begin{equation}\begin{aligned}}
\newcommand{\ea}{\end{aligned}\end{equation}}
\newcommand{\va}{\varphi}
\newcommand{\vth}{\vartheta}
\newcommand{\pp}{\partial}
\newcommand{\vv}[1]{\boldsymbol{\mathrm{#1}}}
\newcommand{\hvv}[1]{\boldsymbol{\hat{\mathrm{#1}}}}
\newcommand{\uv}{\boldsymbol{{\hat{\mathrm{s}}}}}
\newcommand{\uvk}{\boldsymbol{\hat{\mathrm{k}}}}

\newcommand{\rrf}[1]{\mathop{\mathcal{R}_{{#1}}}}
\newcommand{\irrf}[1]{\mathop{\mathcal{R}_{{#1}}^{-1}}}

\usepackage{amsmath}
\usepackage[foot]{amsaddr}
\usepackage{amsthm,amssymb,mathrsfs}
\usepackage{graphicx}

\newtheorem{thm}{Theorem}[section]
\newtheorem{lem}[thm]{Lemma}

\theoremstyle{remark}

\title[]{Three-dimensional analytical discrete-ordinates method for the radiative transport equation}

\author{Manabu Machida$^{1,2}$}
\thanks{CONTACT M. Machida. Email: machida@hama-med.ac.jp}

\author{Kaustav Das$^{1,\dagger}$}
\thanks{$\dagger$Current Address: Department of Biomedical Engineering, Tokyo University of Agriculture and Technology, Koganei, Tokyo 184-8588, Japan (kdas@go.tuat.ac.jp)}

\address{$^1$
Institute for Medical Photonics Research, 
Hamamatsu University School of Medicine, 
Hamamatsu 431-3192, Japan}

\address{$^2$
JST, PRESTO, Kawaguchi, Saitama 332-0012, Japan}

\begin{document}

\begin{abstract}
The radiative transport equation in a three-dimensional infinite medium is considered. The coefficients of the radiative transport equation are assumed to be constant. For a pencil beam, we extend the analytical discrete-ordinates method to three dimensions by rotating reference frames. Obtained results are compared to Monte Carlo simulation.
\end{abstract}

\maketitle

\section{Introduction}
\label{intro}

The discrete-ordinates method was first proposed by Wick \cite{Wick43} and Chandrasekhar \cite{Chandrasekhar44}. In one-dimensional transport theory, analytical solutions are available in terms of singular eigenfunctions by Case's method \cite{Case60,Case-Zweifel,McCormick-Kuscer66,Mika61}. The method of analytical discrete ordinates (ADO) \cite{Barichello11,Barichello-Siewert99a,Barichello-Garcia-Siewert00,Barichello-Siewert02} assumes the same separated solution as Case's method for the equation in which the cosine of the polar angle is discretized. Although the method was constructed for the one-dimensional transport equation, the searchlight problem in three dimensions was considered with ADO in the case of isotropic scattering \cite{Barichello-Siewert00}. ADO was developed for the one-dimensional transport equation with the scattering phase function which has the polar and azimuthal angles \cite{Garcia-Siewert10}.

In this paper we consider the three-dimensional radiative transport equation with general anisotropic scattering and construct ADO in three dimensions with the help of rotated reference frames \cite{Markel04}. We assume that the medium is homogeneous, and the scattering and absorption coefficients of the radiative transport equation are constants. Furthermore we assume the pencil beam for the source term. The formulation is a direct extension of ADO developed in one dimension and the introduction of a pseudoproblem is not necessary. This is made possible by the knowledge of three-dimensional singular eigenfunctions obtained from the extension of Case's method to three dimensions \cite{Machida14}. For isotropic scattering in an infinite medium, solutions are known for different sources \cite{Ganapol-Kornreich95}. General anisotropic scattering can be treated in the numerical scheme proposed below.

Kim and his collaborators developed discrete ordinates for the three-dimensional radiative transport equation \cite{Kim04,Kim-Keller03,Kim-Moscoso04}. The present method is similar to their formulation in the sense that the same structure of the plane-wave decomposition is derived and discrete ordinates are used. However, in the three-dimensional ADO which is developed in this paper, the explicit expression of eigenmodes are obtained and there is no need of computing eigenvalues for each Fourier vector.

The rotated reference frames were first introduced in the spherical-harmonics method \cite{Dede64,Kobayashi77,Markel04}. In particular, Markel devised an efficient numerical algorithm called the method of rotated reference frames \cite{Markel04,Panasyuk06,Machida-etal10}. In the three-dimensional ADO, eigenmodes are obtained by the rotation of the eigenmodes for the original ADO in one-dimensional transport theory.

The remainder of the paper is organized as follows. In Sec.~\ref{rte}, the radiative transport equation is introduced. The specific intensity is then decomposed into the ballistic part and scattering part. In Sec.~\ref{modes}, eigenmodes are considered. Using these eigenmodes, the scattering part of the specific intensity is calculated in Sec.~\ref{Ispart}. The formula for the energy density is derived in Sec.~\ref{eden}. In Sec.~\ref{num}, numerical tests are performed. Concluding remarks are given in Sec.~\ref{concl}. The fundamental solution is calculated in Appendix \ref{green}.

\section{Radiative transport equation}
\label{rte}

We consider the radiative transport equation in the three-dimensional space $\Rm^3$. Let $\vv{r}\in\Rm^3$ be the position vector with $\vv{r}={^t}(\vv{\rho},z)$, $\vv{\rho}={^t}(x,y)$. The direction of propagation is specified by unit vector $\uv\in\Sm^2$. The specific intensity $I(\vv{r},\uv)$ obeys the following radiative transport equation.
\begin{equation}
\left(\uv\cdot\nabla+\mu_t\right)I(\vv{r},\uv)=
\mu_s\int_{\Sm^2}p(\uv,\uv')I(\vv{r},\uv')\,d\uv'+f(\vv{r},\uv),
\label{rte0}
\end{equation}
where $\nabla=\pp/\pp\vv{r}$, and $\mu_t>0$ and $\mu_s>0$ are constants. Moreover we assume that $\mu_a=\mu_t-\mu_s$ is positive. We have $I(\vv{r},\uv)\to0$ as $|\vv{r}|\to\infty$. Here, $p(\uv,\uv')$ is the scattering phase function which is assumed to be
\be
p(\uv,\uv')=
\frac{1}{4\pi}\sum_{l=0}^{l_{\rm max}}(2l+1){\rm g}^lP_l(\uv\cdot\uv')=
\sum_{l=0}^{l_{\rm max}}\sum_{m=-l}^l{\rm g}^lY_{lm}(\uv)Y_{lm}^*(\uv')
\ee
with $l_{\rm max}\ge0$, Legendre polynomials $P_l$, and spherical harmonics $Y_{lm}$. Spherical harmonics are defined as
\be
Y_{lm}(\uv)=Y_{lm}(\mu,\va)=\sqrt{\frac{2l+1}{4\pi}\frac{(l-m)!}{(l+m)!}}P_l^m(\mu)e^{im\va},
\ee
where $\mu\in[-1,1]$ is the cosine of the polar angle of $\uv$, $\va\in[0,2\pi)$ is the azimuthal angle of $\uv$, and $P_l^m(\mu)$ are associated Legendre polynomials. The constant ${\rm g}\in[0,1]$ is the anisotropic factor. We will consider the following source term.
\begin{equation}
f(\vv{r},\uv)=\delta(\vv{\rho})\delta(z)\delta(\uv-\uv_0),
\label{source1}
\end{equation}
where $\delta(\vv{\rho})=\delta(x)\delta(y)$ and $\uv_0\in\Sm^2$. That is, $I(\vv{r},\uv)$ is the fundamental solution of the radiative transport equation. Here, $\uv\in\Sm^2$ is given by
\be
\uv=\left(\begin{array}{c}\vv{\omega} \\ \mu\end{array}\right),
\qquad
\vv{\omega}=\left(\begin{array}{c}
\sqrt{1-\mu^2}\cos\va \\ \sqrt{1-\mu^2}\sin\va
\end{array}\right)
\ee
for $-1\le\mu\le1$, $0\le\va<2\pi$. We note that $\delta(\uv-\uv_0)=\delta(\mu-\mu_0)\delta(\va-\va_0)$ with $\mu_0,\va_0$ angles for $\uv_0$.

By dividing both sides of the above radiative transport equation (\ref{rte0}) by $\mu_t$, we obtain
\be
\left(\uv\cdot\nabla_*+1\right)I_*(\vv{r}_*,\uv)=
\varpi\int_{\Sm^2}p(\uv,\uv')I_*(\vv{r}_*,\uv')\,d\uv'+f_*(\vv{r}_*,\uv),
\ee
where $\vv{r}_*=\mu_t\vv{r}$, $\nabla_*=\pp/\pp\vv{r}_*$, $I_*(\vv{r}_*,\uv)=I(\vv{r},\uv)$, and
\be
f_*(\vv{r}_*,\uv)=\frac{1}{\mu_t}f(\vv{r},\uv)=
\mu_t^2\delta(\vv{\rho}_*)\delta(z_*)\delta(\uv-\uv_0).
\ee
Here, $\varpi=\mu_s/\mu_t\in(0,1)$ is the albedo for single scattering. Hereafter we will drop the subscript $*$.

We decompose the specific intensity into the ballistic and scattering terms as
\begin{equation}
I(\vv{r},\uv)=I_b(\vv{r},\uv)+I_s(\vv{r},\uv).
\label{Idecomp}
\end{equation}
The ballistic term $I_b$ satisfies
\be
\left(\uv\cdot\nabla+1\right)I_b(\vv{r},\uv)=f(\vv{r},\uv)
\quad\mbox{in}\;\Rm^3\times\Sm^2,
\ee
The scattering term $I_s$ satisfies
\begin{equation}
\left(\uv\cdot\nabla+1\right)I_s(\vv{r},\uv)=
\varpi\int_{\Sm^2}p(\uv,\uv')I_s(\vv{r},\uv')\,d\uv'+
S(\vv{r},\uv)
\quad\mbox{in}\;\Rm^3\times\Sm^2,
\label{subt:rte}
\end{equation}
where
\be
S(\vv{r},\uv)=
\varpi\int_{\Sm^2}p(\uv,\uv')I_b(\vv{r},\uv')\,d\uv'.
\ee
Let us introduce the step function $\Theta$ as $\Theta(x)=1$ for $x>0$ and $\Theta(x)=0$ for $x<0$. We obtain
\be
I_b(\vv{r},\uv)=
\Theta(\mu_0z)\frac{\mu_t^2}{|\mu_0|}\delta\left(\vv{\rho}-\vv{\omega}_0z/\mu_0\right)e^{-z/\mu_0}\delta(\uv-\uv_0),
\ee
where $\vv{\omega}_0,\mu_0$ are components of $\uv_0$. We note that
\ba
\delta\left(\vv{\rho}-\vv{\omega}z/\mu\right)
&=
\frac{1}{|\vv{\rho}|}\delta\left(|\vv{\rho}|-z\frac{\sqrt{1-\mu^2}}{\mu}\right)
\delta\left(\va_{\vv{\rho}}-\va\right)
\\
&=
\left|\frac{z}{(\rho^2+z^2)^{\frac{3}{2}}}\right|\left[\delta\left(\mu-\frac{z}{\sqrt{\rho^2+z^2}}\right)+\delta\left(\mu+\frac{z}{\sqrt{\rho^2+z^2}}\right)\right]
\delta\left(\va_{\vv{\rho}}-\va\right),
\ea
where $\va_{\vv{\rho}}$ is the angle of $\vv{\rho}$ and $\rho=|\vv{\rho}|$. Hence,
\be
S(\vv{r},\uv)=
\varpi p(\uv,\uv_0)\Theta(\mu_0z)\frac{\mu_t^2}{|\mu_0|}\delta\left(\vv{\rho}-\vv{\omega}_0z/\mu_0\right)e^{-z/\mu_0}.
\ee
The Fourier transform $\widetilde{I}_s(\vv{q},z,\uv)$ is given by
\be
\widetilde{I}_s(\vv{q},z,\uv)=
\int_{\Rm^2}e^{-i\vv{q}\cdot\vv{\rho}}I_s(\vv{r},\uv)\,d\vv{\rho},
\ee
where
\be
\vv{q}=q\begin{pmatrix}\cos\va_{\vv{q}}\\\sin\va_{\vv{q}}\end{pmatrix},\quad
q=|\vv{q}|,\quad 0\le\va_{\vv{q}}<2\pi.
\ee
This $\widetilde{I}_s(\vv{q},z,\uv)$ is obtained as the solution to the following equation.
\begin{equation}
\left(\mu\frac{\pp}{\pp z}+1+i\vv{\omega}\cdot\vv{q}\right)
\widetilde{I}_s=
\varpi\int_{\Sm^2}p(\uv,\uv')\widetilde{I}_s(\vv{q},z,\uv')\,d\uv'+
\widetilde{S},\quad z\in\Rm,\;\uv\in\Sm^2,
\label{intro:I2Feq}
\end{equation}
where
\begin{equation}
\widetilde{S}(\vv{q},z,\uv)=
\varpi p(\uv,\uv_0)\Theta(\mu_0z)\frac{\mu_t^2}{|\mu_0|}e^{-z/\mu_0}e^{-i\vv{q}\cdot\vv{\omega}_0z/\mu_0}.
\label{search:tildeS}
\end{equation}
The scattering term is given by
\begin{equation}
I_s(\vv{r},\uv)=I_s(\vv{\rho},z,\uv)=
\frac{1}{(2\pi)^2}\int_{\Rm^2}e^{i\vv{q}\cdot\vv{\rho}}
\widetilde{I}_s(\vv{q},z,\uv)\,d\vv{q}.
\label{intro:I2fourier}
\end{equation}
We note that
\be
\vv{\omega}\cdot\vv{q}=q\sqrt{1-\mu^2}\cos(\va-\va_{\vv{q}}).
\ee

We will obtain $\widetilde{I}_s$ by using discrete ordinates:
\be
\widetilde{I}_s\approx\check{I}_s,
\ee
where $\check{I}_s$ is the solution to the following equation.
\begin{equation}
\left(\mu_i\frac{\pp}{\pp z}+1+i\vv{\omega}_i\cdot\vv{q}\right)
\check{I}_s=
\varpi\sum_{i'=1}^{2N}w_{i'}\int_0^{2\pi}p(\uv_i,\uv_{i'})\check{I}_s(\vv{q},z,\uv_{i'})\,d\va'+
\widetilde{S}
\label{intro:I2do}
\end{equation}
for $z\in\Rm$, $\uv_i\in\Sm^2$. Here, we discretize the integral by discrete ordinates with the Gauss-Legendre quadrature of weights $w_i$ ($i=1,\dots,N$). We use the Golub-Welsch algorithm \cite{GW}. We label $\mu_i$ ($i=1,\dots,N$) such that $0<\mu_1<\mu_2<\cdots<\mu_N<1$ and $-1<\mu_{2N}<\cdots<\mu_{N+2}<\mu_{N+1}<0$ ($\mu_{N+i}=-\mu_i$, $1\le i\le N$). Furthermore we introduced
\be
\uv_i=
\begin{pmatrix}\vv{\omega}_i \\ \mu_i\end{pmatrix},
\quad
\vv{\omega}_i=\begin{pmatrix}
\sqrt{1-\mu_i^2}\cos\va \\ \sqrt{1-\mu_i^2}\sin\va
\end{pmatrix}
\ee
for $i=1,2,\dots,2N$, $0\le\va<2\pi$. We have
\be
\vv{\omega}_i\cdot\vv{q}=
q\sqrt{1-\mu_i^2}\cos\left(\va-\va_{\vv{q}}\right).
\ee
Let us define
\be
\omega_i^0(q,\va)=q\sqrt{1-\mu_i^2}\cos\va.
\ee

\section{Eigenmodes}
\label{modes}

In order to obtain $\check{I}_s$ in (\ref{intro:I2do}), we find the solution $\check{I}$ to (\ref{homog}) below with separation of variables. We refer to $\check{I}$ as eigenmodes. The eigenmodes are labeled by a separation constant $\nu$. This constant $\nu$, which is referred to as an eigenvalue, is the reciprocal of the square roof of an eigenvalue of the eigenproblem (\ref{evalsys}) below or the negative of the reciprocal.

We begin by defining unit vector $\uvk$ as \cite{Markel04}
\be
\uvk=\uvk(\nu,\vv{q})=
\left(\begin{array}{c}-i\nu\vv{q}\\\hat{k}_z(\nu q)\end{array}\right),
\quad
\hat{k}_z(\nu q)=\sqrt{1+(\nu q)^2},
\ee
where $\nu\in\Rm$. We will extend the rotation in $\Rm^3$ to $\Cm^3$ by analytic continuation (see (\ref{rotWig}) below) \cite{Panasyuk06,Machida-etal10}. To this end, we extend the usual dot product in $\Rm^3$ to $\Cm^3$ as $\vv{\alpha}\cdot\vv{\beta}=\alpha_1\beta_1+\alpha_2\beta_2+\alpha_3\beta_3$ for $\vv{\alpha},\vv{\beta}\in\Cm^3$. We note that $\uvk\cdot\uvk=1$. We have
\begin{equation}
\uv_i\cdot\uvk=
-i\nu q\sqrt{1-\mu_i^2}\cos(\va-\va_{\vv{q}})+\hat{k}_z(\nu q)\mu_i.
\label{modes:sk}
\end{equation}

Let us consider the homogeneous equation below (i.e., the equation (\ref{intro:I2do}) without the source term $\widetilde{S}$).
\begin{equation}
\left(\mu_i\frac{\pp}{\pp z}+1+i\vv{\omega}_i\cdot\vv{q}\right)
\check{I}(\vv{q},z,\uv_i)=
\varpi\sum_{i'=1}^{2N}w_{i'}\int_0^{2\pi}p(\uv_i,\uv_{i'})\check{I}(\vv{q},z,\uv_{i'})\,d\va'
\label{homog}
\end{equation}
for $\vv{q}\in\Rm^2$, $z\in\Rm$, $\uv_i\in\Sm^2$. Let $\rrf{\uvk}$ be the operator which rotates the reference frame in such a way that the $z$-axis is rotated to the direction of $\uvk$ \cite{Machida15}. In ADO for the one-dimensional transport equation, the separated solution in which the spatial variable only exists in the exponential function was used \cite{Barichello11,Barichello-Siewert99a,Barichello-Garcia-Siewert00,Barichello-Siewert02}. In the same spirit, we assume the following separated solution.
\begin{equation}
\check{I}(\vv{q},z,\uv_i)=
\rrf{\uvk(\nu,\vv{q})}\Phi_{\nu}^m(\uv_i)e^{-\hat{k}_z(\nu q)z/\nu},
\label{modes:separatedI}
\end{equation}
where
\be
\Phi_{\nu}^m(\uv_i)=
\phi^m(\nu,\mu_i)\left(1-\mu_i^2\right)^{|m|/2}e^{im\va}.
\ee
The function $\phi^m(\nu,\mu_i)$ is normalized as
\begin{equation}
\sum_{i=1}^{2N}w_i\phi^m(\nu,\mu_i)\left(1-\mu_i^2\right)^{|m|}=1.
\label{normcond}
\end{equation}
The normalization condition (\ref{normcond}) implies
\be
\frac{1}{2\pi}\sum_{i=1}^{2N}w_i\int_0^{2\pi}\phi^m(\nu,\uv_i\cdot\uvk)
\left[1-(\uv_i\cdot\uvk)^2\right]^{|m|}\,d\va=1.
\ee

We note that
\begin{equation}
\rrf{\uvk}Y_{lm}(\uv)=\sum_{m'=-l}^le^{-im'\va_{\uvk}}d_{m'm}^l(\theta_{\uvk})Y_{lm'}(\uv),
\label{rotWig}
\end{equation}
where $d_{m'm}^l$ are Wigner's $d$-matrices \cite{Varshalovich}. If $\uvk\in\Rm^3$, the operator $\rrf{\uvk}$ means the usual rotation of the reference frame in $\Rm^3$ such that the $z$-axis coincides with the direction of the vector $\uvk$ \cite{Markel04}. For $\uvk\in\Cm^3$, the polar angle $\theta_{\uvk}$ and azimuthal angle $\va_{\uvk}$ of $\uvk$ in the laboratory frame are analytically continued as follows \cite{Panasyuk06,Machida-etal10}. We write
\be
\cos\theta_{\uvk}=\hvv{z}\cdot\uvk=\hat{k}_z,\quad
\sin\theta_{\uvk}=\sqrt{1-\cos^2\theta_{\uvk}}=\sqrt{\hat{k}_x^2+\hat{k}_y^2},
\ee
and
\be
\cos\va_{\uvk}=\frac{\hat{k}_x}{\sqrt{\hat{k}_x^2+\hat{k}_y^2}},\quad
\sin\va_{\uvk}=\frac{\hat{k}_y}{\sqrt{\hat{k}_x^2+\hat{k}_y^2}}.
\ee
We take square roots such that $0\le\mathop{\mathrm{arg}}\sqrt{z}<\pi$ and $\Im\sqrt{z}\ge0$ for all $z\in\Cm$ by drawing the branch cut on the positive real axis. The angles $\theta_{\uvk},\va_{\uvk}$ of $\uvk$ are given by
\be
\cos\theta_{\uvk(\nu,\vv{q})}=\hat{k}_z(\nu q),\quad
\sin\theta_{\uvk(\nu,\vv{q})}=i|\nu q|,
\ee
and
\be
\va_{\uvk(\nu,\vv{q})}=\left\{\begin{aligned}
\va_{\vv{q}}+\pi
&\quad(\nu>0),
\\
\va_{\vv{q}}
&\quad(\nu<0).
\end{aligned}\right.
\ee
We can write
\be
d_{mm'}^l\left(\theta_{\uvk(\nu,\vv{q})}\right)=d_{mm'}^l[i\tau(\nu q)].
\ee

Using $Y_{lm}^*(\uv)=(-1)^mY_{l,-m}(\uv)$, we have
\be
\rrf{\uvk}Y_{lm}^*(\uv)=
(-1)^m\sum_{m'=-l}^le^{-im'\va_{\uvk}}d_{m',-m}^l(\theta_{\uvk})Y_{lm'}(\uv)=
\sum_{m'=-l}^le^{im'\va_{\uvk}}d_{m'm}^l(\theta_{\uvk})Y_{lm'}^*(\uv).
\ee
Moreover, we have
\be
\irrf{\uvk}Y_{lm}(\uv)=\sum_{m'=-l}^le^{im'\va_{\uvk}}d_{mm'}^l(\theta_{\uvk})Y_{lm'}(\uv),
\ee
\be
\irrf{\uvk}Y_{lm}^*(\uv)=\sum_{m'=-l}^le^{-im'\va_{\uvk}}d_{mm'}^l(\theta_{\uvk})Y_{lm'}^*(\uv).
\ee
The identity $\irrf{\uvk}\rrf{\uvk}Y_{lm}(\uv)=Y_{lm}(\uv)$ can be derived with the formula $\sum_{m'=-l}^ld_{m'm}^l(\theta_{\uvk})d_{m'm''}^l(\theta_{\uvk})=\delta_{mm''}$.

We note that $\mu_i=\uv_i\cdot\hvv{z}$, where $\hvv{z}={^t}(0,0,1)$. We have
\begin{equation}
\rrf{\uvk(\nu,\vv{q})}\mu_i=\uv_i\cdot\uvk(\nu,\vv{q})=
-i\nu\vv{\omega}_i\cdot\vv{q}+\hat{k}_z(\nu q)\mu_i.
\label{rrfmu}
\end{equation}
This relation is also obtained as follows using $d_{00}^1(\theta)=\cos\theta$, $d_{10}^1(\theta)=-(1/\sqrt{2})\sin\theta$, and $d_{mm'}^l(\theta)=(-1)^{m+m'}d_{-m,-m'}^l(\theta)$:
\ba
\rrf{\uvk(\nu,\vv{q})}\mu_i
&=
\sqrt{\frac{4\pi}{3}}\rrf{\uvk(\nu,\vv{q})}Y_{10}(\uv_i)
\\
&=
\sqrt{\frac{4\pi}{3}}\sum_{m'=-1}^1e^{-im'\va_{\uvk}}d_{m'0}^1(\theta_{\uvk})Y_{1m'}(\uv_i)
\\
&=
\sqrt{1-\mu_i^2}\sin\theta_{\uvk}\cos(\va-\va_{\uvk})+\mu_i\cos\theta_{\uvk}
\\
&=
\hat{k}_z(\nu q)\mu_i-i\nu q\sqrt{1-\mu_i^2}\cos(\va-\va_{\vv{q}}),
\ea

By substitution we have
\begin{equation}
\left(1-\frac{\uv_i\cdot\uvk(\nu,\vv{q})}{\nu}\right)
\rrf{\uvk(\nu,\vv{q})}\Phi_{\nu}^m(\uv_i)=
\varpi\sum_{i'=1}^{2N}w_{i'}\int_0^{2\pi}p(\uv_i,\uv_{i'})
\rrf{\uvk(\nu,\vv{q})}\Phi_{\nu}^m(\uv_{i'})\,d\va'.
\label{modes:phieq}
\end{equation}
Now, we view (\ref{modes:phieq}) in the reference frame whose $z$-axis is 
rotated to the direction of $\uvk$. Note that $p(\uv_i,\uv_{i'})$ is invariant under rotation because it depends only on $\uv_i\cdot\uv_{i'}$. We note by (\ref{rrfmu})
\be
\uv_i\cdot\uvk(\nu,\vv{q})=\rrf{\uvk(\nu,\vv{q})}\uv_i\cdot\hvv{z},
\ee
where $\hvv{z}={^t}(0,0,1)$. If we operate $\irrf{\uvk(\nu,\vv{q})}$ on the left-hand side of (\ref{modes:phieq}), we obtain
\ba
\irrf{\uvk(\nu,\vv{q})}\left(1-\frac{\uv_i\cdot\uvk(\nu,\vv{q})}{\nu}\right)
\rrf{\uvk(\nu,\vv{q})}\Phi_{\nu}^m(\uv_i)
&=
\irrf{\uvk(\nu,\vv{q})}\rrf{\uvk(\nu,\vv{q})}\left(1-\frac{\uv_i\cdot\hvv{z}}{\nu}\right)\Phi_{\nu}^m(\uv_i)
\\
&=
\left(1-\frac{\uv_i\cdot\hvv{z}}{\nu}\right)\Phi_{\nu}^m(\uv_i).
\ea
Let us rewrite the right-hand side of (\ref{modes:phieq}) as
\ba
&
\varpi\sum_{i'=1}^{2N}w_{i'}\int_0^{2\pi}p(\uv_i,\uv_{i'})
\rrf{\uvk(\nu,\vv{q})}\Phi_{\nu}^m(\uv_{i'})\,d\va'
\\
&=
\varpi\sum_{i'=1}^{2N}w_{i'}\int_0^{2\pi}p\left(\rrf{\uvk(\nu,\vv{q})}\uv_i,\rrf{\uvk(\nu,\vv{q})}\uv_{i'}\right)
\rrf{\uvk(\nu,\vv{q})}\Phi_{\nu}^m(\uv_{i'})\,d\va'
\\
&\approx
\varpi\sum_{i'=1}^{2N}w_{i'}\int_0^{2\pi}p\left(\rrf{\uvk(\nu,\vv{q})}\uv_i,\uv_{i'}\right)
\Phi_{\nu}^m(\uv_{i'})\,d\va'.
\ea
In the last step of the above equation, we used the fact that the integral over all angles is invariant under the rotation of the reference frame. Similarly by operating $\irrf{\uvk(\nu,\vv{q})}$ on the right-hand side of (\ref{modes:phieq}), we have
\ba
&
\irrf{\uvk(\nu,\vv{q})}
\varpi\sum_{i'=1}^{2N}w_{i'}\int_0^{2\pi}p(\uv_i,\uv_{i'})
\rrf{\uvk(\nu,\vv{q})}\Phi_{\nu}^m(\uv_{i'})\,d\va'
\\
&\approx
\irrf{\uvk(\nu,\vv{q})}
\varpi\sum_{i'=1}^{2N}w_{i'}\int_0^{2\pi}p\left(\rrf{\uvk(\nu,\vv{q})}\uv_i,\uv_{i'}\right)
\Phi_{\nu}^m(\uv_{i'})\,d\va'
\\
&=
\varpi\sum_{i'=1}^{2N}w_{i'}\int_0^{2\pi}p(\uv_i,\uv_{i'})
\Phi_{\nu}^m(\uv_{i'})\,d\va'.
\ea
Thus by inverse rotation, Eq.~(\ref{modes:phieq}) is reduced to
\begin{equation}
\left(1-\frac{\mu_i}{\nu}\right)\Phi_{\nu}^m(\uv_i)=
\varpi\sum_{l'=0}^{l_{\rm max}}\sum_{m'=-l'}^{l'}{\rm g}^{l'}Y_{l'm'}(\uv_i)
\sum_{i'=1}^{2N}w_{i'}\int_0^{2\pi}Y_{l'm'}^*(\uv_{i'})\Phi_{\nu}^m(\uv_{i'})\,d\va'
\label{homo1d}
\end{equation}
for $i=1,\dots,2N$. We have
\begin{equation}
\sum_{i'=1}^{2N}w_{i'}\int_0^{2\pi}Y_{l'm'}^*(\uv_{i'})\Phi_{\nu}^m(\uv_{i'})\,d\va'=
\delta_{mm'}(-1)^m\sqrt{(2l'+1)\pi}g_{l'}^m(\nu),
\label{intYPhi}
\end{equation}
where
\be
g_l^m(\nu)=
(-1)^m\sqrt{\frac{(l-m)!}{(l+m)!}}\sum_{i=1}^{2N}w_i
\phi^m(\nu,\mu_i)\left(1-\mu_i^2\right)^{|m|/2}P_l^m(\mu_i).
\ee
Noting that $P_l^{-m}(\mu_i)=(-1)^m[(l-m)!/(l+m)!]P_l^m(\mu_i)$ and 
$P_m^m(\mu_i)=(-1)^m(2m-1)!!(1-\mu_i^2)^{m/2}$ ($m\ge0$), we obtain
\be
g_m^m(\nu)=\frac{(2m-1)!!}{\sqrt{(2m)!}}=\frac{\sqrt{(2m)!}}{2^mm!},\quad
g_m^{-m}(\nu)=(-1)^mg_m^m(\nu)
\ee
for $m\ge0$. Equation (\ref{homo1d}) is written as
\be
\left(1-\frac{\mu_i}{\nu}\right)\phi^m(\nu,\mu_i)\left(1-\mu_i^2\right)^{|m|/2}
=
\frac{\varpi}{2}(-1)^m\sum_{l'=|m|}^{l_{\rm max}}(2l'+1){\rm g}^{l'}
\sqrt{\frac{(l'-m)!}{(l'+m)!}}P_{l'}^m(\mu_i)g_{l'}^m(\nu).
\ee

Let us define
\be
p_l^m(\mu)=
(-1)^m\sqrt{\frac{(l-m)!}{(l+m)!}}P_l^m(\mu)\left(1-\mu^2\right)^{-|m|/2}.
\ee
We have
\be
p_m^m(\mu)=\frac{(2m-1)!!}{\sqrt{(2m)!}}=\frac{\sqrt{(2m)!}}{2^mm!},\quad
p_m^{-m}(\mu)=(-1)^mp_m^m(\mu)
\ee
for $m\ge0$. Moreover from $(l-m+1)P_{l+1}^m(\mu)=(2l+1)\mu P_l^m(\mu)-(l+m)P_{l-1}^m(\mu)$,
\be
\sqrt{(l+1)^2-m^2}p_{l+1}^m(\mu)=(2l+1)\mu p_l^m(\mu)-\sqrt{l^2-m^2}p_{l-1}^m(\mu)
\ee
for $l\ge|m|+1$, and
\be
p_{|m|+1}^m(\mu)=\sqrt{2|m|+1}\mu p_{|m|}^m(\mu).
\ee
Thus,
\begin{equation}
\left(\nu-\mu_i\right)\phi^m(\nu,\mu_i)=
\frac{\varpi\nu}{2}\sum_{l'=|m|}^{l_{\rm max}}(2l'+1){\rm g}^{l'}
p_{l'}^m(\mu_i)g_{l'}^m(\nu).
\label{eigenmode1}
\end{equation}
We note that
\be
g_l^m(\nu)=\sum_{i=1}^{2N}w_i\phi^m(\nu,\mu_i)p_l^m(\mu_i)\left(1-\mu_i^2\right)^{|m|}.
\ee
The following recurrence relations are obtained.
\ba
&
\sqrt{(l+1)^2-m^2}g_{l+1}^m(\nu)+\sqrt{l^2-m^2}g_{l-1}^m(\nu)
\\
&=
(2l+1)\nu g_l^m(\nu)-
\frac{\varpi\nu}{2}\sum_{l'=|m|}^{l_{\rm max}}(2l+1)(2l'+1){\rm g}^{l'}g_{l'}^m(\nu)
\sum_{i=1}^{2N}w_ip_l^m(\mu_i)p_{l'}^m(\mu_i)\left(1-\mu_i^2\right)^{|m|}.
\ea
Recall $\int_{-1}^1P_l^m(\mu)P_{l'}^m(\mu)\,d\mu=2(l+m)!\delta_{ll'}/[(2l+1)(l-m)!]$. For sufficiently large $N$, the recurrence relations below are obtained.
\be
\sqrt{(l+1)^2-m^2}g_{l+1}^m(\nu)+\sqrt{l^2-m^2}g_{l-1}^m(\nu)=
\nu h_lg_l^m(\nu)
\ee
for $l\ge|m|+1$, and
\be
\sqrt{2|m|+1}g_{|m|+1}^m(\nu)=\nu h_{|m|}g_{|m|}^m(\nu),
\ee
where
\be
h_l=\left\{\begin{aligned}
(2l+1)\left(1-\varpi{\rm g}^l\right),
&\quad 0\le l\le l_{\rm max},
\\
2l+1,&\quad l>l_{\rm max}.
\end{aligned}\right..
\ee
The polynomials $g_l^m(\nu)$ are called the normalized Chandrasekhar polynomials \cite{Garcia-Siewert89,Garcia-Siewert90}.

For numerical calculation, $g_l^m(\nu)$ can be computed using the recurrence relations. To compute $g_l^m(\nu)$ for $\nu>1$, we define \cite{Garcia-Siewert90}
\be
\bar{g}_l^m(\nu)=\frac{g_{l+1}^m(\nu)}{g_l^m(\nu)}.
\ee
Then we have
\be
\bar{g}_{l-1}^m(\nu)=
\frac{\sqrt{l^2-m^2}}{\nu h_l-\sqrt{(l+1)^2-m^2}\bar{g}_l^m(\nu)}
\ee
for $l=|m|+1,|m|+2,\dots$. We can set $\bar{g}_l^m(\nu)=0$ for sufficiently large $l$. Using $\bar{g}_l^m(\nu)$, we obtain
\be
g_{l+1}^m(\nu)=\bar{g}_l^m(\nu)g_l^m(\nu),
\quad l=|m|,|m|+1,\dots.
\ee

Suppose $\nu\neq\mu_i$. Then from (\ref{eigenmode1}),
\be
\phi^m(\nu,\mu_i)=
\frac{\varpi\nu}{2}\frac{g^m(\nu,\mu_i)}{\nu-\mu_i},
\ee
where
\be
g^m(\nu,\mu_i)=
\sum_{l'=|m|}^{l_{\rm max}}(2l'+1){\rm g}^{l'}p_{l'}^m(\mu_i)g_{l'}^m(\nu).
\ee
We obtain
\begin{equation}
\Phi_{\nu}^m(\uv_i)=
\frac{(-1)^m\varpi\nu}{\nu-\mu_i}\sum_{l=|m|}^{l_{\rm max}}\sqrt{(2l+1)\pi}{\rm g}^lg_l^m(\nu)Y_{lm}(\uv_i).
\label{modes:modes0}
\end{equation}
If $\nu\neq\uv_i\cdot\uvk$, we have
\be
\phi^m\left(\nu,\uv_i\cdot\uvk(\nu,\vv{q})\right)=
\frac{\varpi\nu}{2}\frac{g^m\left(\nu,\uv_i\cdot\uvk(\nu,\vv{q})\right)}{\nu-\uv_i\cdot\uvk(\nu,\vv{q})}.
\ee
Thus,
\begin{equation}
\rrf{\uvk(\nu,\vv{q})}\Phi_{\nu}^m(\uv_i)=
\frac{(-1)^m\varpi\nu}{\nu-\uv_i\cdot\uvk(\nu,\vv{q})}\sum_{l=|m|}^{l_{\rm max}}\sqrt{(2l+1)\pi}{\rm g}^lg_l^m(\nu)
\left(\rrf{\uvk(\nu,\vv{q})}Y_{lm}(\uv_i)\right).
\label{modes:modes}
\end{equation}

The above eigenmodes satisfy the orthogonality relation.

\begin{lem}
\label{modes:lem:orth}
\begin{equation}
\sum_{i=1}^{2N}w_i\mu_i\int_0^{2\pi}
\left(\rrf{\uvk(\nu,\vv{q})}\Phi_{\nu}^m(\uv_i)\right)
\left(\rrf{\uvk(\nu',\vv{q})}\Phi_{\nu'}^{m'*}(\uv_i)\right)\,d\va=
\mathcal{N}(\nu,q)\delta_{\nu\nu'}\delta_{mm'},
\label{lemeq1}
\end{equation}
where $\mathcal{N}(\nu,q)$ is a positive constant which depends on $\nu,q$.
\end{lem}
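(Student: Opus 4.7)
\emph{Proof plan.} The strategy is the standard ADO orthogonality argument---pairing the two eigenvalue equations and using symmetry of the scattering kernel---transferred to the rotated setting.

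First, since the one-dimensional equation (\ref{homo1d}) defining $\phi^m(\nu,\cdot)$ depends on $m$ only through $|m|$, the complex conjugate $\Phi_{\nu'}^{m'*}$ satisfies the same equation as $\Phi_{\nu'}^{m'}$, and applying $\rrf{\uvk(\nu',\vv{q})}$ as in the derivation of (\ref{modes:phieq}) shows that $\tilde\Psi_{\nu'}^{m'}(\uv_i,\va):=\rrf{\uvk(\nu',\vv{q})}\Phi_{\nu'}^{m'*}(\uv_i)$ obeys the same form of homogeneous equation as $\Psi_\nu^m:=\rrf{\uvk(\nu,\vv{q})}\Phi_\nu^m$, with $(\nu,m)$ replaced by $(\nu',m')$. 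Dividing (\ref{modes:phieq}) by $\nu$ and using $\uv_i\cdot\uvk(\nu,\vv{q})/\nu=-i\vv{\omega}_i\cdot\vv{q}+\lambda(\nu)\mu_i$ with $\lambda(\nu):=\hat{k}_z(\nu q)/\nu$, both equations take the form $(1+i\vv{\omega}_i\cdot\vv{q}-\lambda\mu_i)(\cdot)=\varpi\sum_{i'}w_{i'}\int_0^{2\pi}p(\uv_i,\uv_{i'})(\cdot)\,d\va'$ with the appropriate $\lambda$ on the left.

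I would then multiply the equation for $\Psi_\nu^m$ by $\tilde\Psi_{\nu'}^{m'}(\uv_i,\va)$ and the equation for $\tilde\Psi_{\nu'}^{m'}$ by $\Psi_\nu^m(\uv_i,\va)$, apply $\sum_i w_i\int_0^{2\pi}(\cdot)\,d\va$ to both, and subtract. The $\lambda$-independent term $1+i\vv{\omega}_i\cdot\vv{q}$ cancels pointwise, and the symmetry $p(\uv_i,\uv_{i'})=p(\uv_{i'},\uv_i)$ combined with the relabeling $(i,\va)\leftrightarrow(i',\va')$ makes the two bilinear scattering contributions equal, so they cancel as well. What remains is
\[
(\lambda(\nu')-\lambda(\nu))\sum_{i=1}^{2N}w_i\mu_i\int_0^{2\pi}\Psi_\nu^m(\uv_i,\va)\,\tilde\Psi_{\nu'}^{m'}(\uv_i,\va)\,d\va=0.
\]
Since $\nu\mapsto\lambda(\nu)=\sqrt{1+(\nu q)^2}/\nu$ is strictly monotone on each of $(0,\infty)$ and $(-\infty,0)$, $\lambda(\nu)=\lambda(\nu')$ forces $\nu=\nu'$ (of the same sign), so the inner product vanishes whenever $\nu\neq\nu'$. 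For the diagonal $(\nu=\nu',\,m=m')$ I would identify $\mathcal{N}(\nu,q)$ by inserting (\ref{modes:modes}) directly into the left-hand side and reducing the resulting angular sum to the one-dimensional normalization (\ref{normcond}); positivity of $\mathcal{N}(\nu,q)$ then follows from the analogous positivity of the 1D ADO norm.

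The main obstacle is the residual case $\nu=\nu'$ with $m\neq m'$, in which the subtraction collapses to $0=0$. There the two rotations coincide, so the integrand may be written as $\rrf{\uvk(\nu,\vv{q})}[\Phi_\nu^m\Phi_\nu^{m'*}]$; the pre-rotation factor carries the azimuthal phase $e^{i(m-m')\va}$, and orthogonality in $m$ would have to be extracted from this azimuthal (equivalently, $\uvk$-axial angular-momentum) structure of the rotated modes, or else from the observation that in generic position the sub-spectra $\{\nu\}$ belonging to distinct $m$-sectors of (\ref{eigenmode1}) are disjoint, so that $\nu=\nu'$ forces $m=m'$ and the sub-case is vacuous.
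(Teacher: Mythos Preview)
Your argument is essentially the paper's: pair the two rotated eigenmode equations, subtract, use the symmetry of $p$ to cancel the scattering terms, and obtain $\bigl(\hat{k}_z(\nu' q)/\nu'-\hat{k}_z(\nu q)/\nu\bigr)$ times the inner product equals zero. For the residual sub-case $\nu=\nu'$ with $m\neq m'$ that you flag as the main obstacle, the paper does precisely your option~(a)---it writes the integrand as $\mu_i\,\rrf{\uvk}\bigl[\Phi_\nu^{m_1}\Phi_\nu^{m_2}\bigr]$ (working with the un-conjugated pair and converting via $\Phi_\nu^{-m}=\Phi_\nu^{m*}$ at the very end), invokes rotation-invariance of the angular integral, and reads off the Kronecker delta from the azimuthal factor $e^{i(m_1+m_2)\va}$.
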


The factor $\mathcal{N}(\nu,q)$ in Lemma \ref{modes:lem:orth} satisfies $\mathcal{N}(-\nu,q)=-\mathcal{N}(\nu,q)$ because $g^m(-\nu,-\mu_i)=g^m(\nu,\mu_i)$. Note that
\be
\left|\Phi_{\nu}^m(\uv_i)\right|^2=
\frac{\pi\varpi^2\nu^2}{(\nu-\mu_i)^2}\left[\sum_{l=|m|}^{l_{\rm max}}
\sqrt{2l+1}{\rm g}^lY_{lm}(\mu_i,0)g_l^m(\nu)\right]^2,
\ee
and for $\uvk=\uvk(\nu,\vv{q})$,
\ba
\irrf{\uvk}\mu_i
&=
\sqrt{\frac{4\pi}{3}}\irrf{\uvk}Y_{10}(\uv_i)=
\sqrt{\frac{4\pi}{3}}\sum_{m'=-1}^1d_{0m'}^1(\vth_{\uvk})Y_{1m'}(\uv_i)
\\
&=
\hat{k}_z(\nu q)\mu_i-i|\nu q|\sqrt{1-\mu_i^2}\cos\va.
\ea
Since integrals over all angles do not change their values if the reference frame is rotated, for sufficiently large $N$ we have
\ba
&
\sum_{i=1}^{2N}w_i\mu_i\int_0^{2\pi}
\left(\rrf{\uvk(\nu,\vv{q})}\Phi_{\nu}^m(\uv_i)\right)
\left(\rrf{\uvk(\nu,\vv{q})}\Phi_{\nu}^{m*}(\uv_i)\right)\,d\va
\\
&=
\sum_{i=1}^{2N}w_i\int_0^{2\pi}\mu_i\rrf{\uvk(\nu,\vv{q})}
\left(\Phi_{\nu}^m(\uv_i)\Phi_{\nu}^{m*}(\uv_i)\right)\,d\va
\\
&\approx
\sum_{i=1}^{2N}w_i\int_0^{2\pi}\irrf{\uvk(\nu,\vv{q})}\left[\mu_i\rrf{\uvk(\nu,\vv{q})}
\left(\Phi_{\nu}^m(\uv_i)\Phi_{\nu}^{m*}(\uv_i)\right)\right]\,d\va
\\
&=
\sum_{i=1}^{2N}w_i\int_0^{2\pi}
\left(\irrf{\uvk(\nu,\vv{q})}\mu_i\right)
\left|\Phi_{\nu}^m(\uv_i)\right|^2\,d\va
\ea
Hence we have
\ba
\mathcal{N}(\nu,q)
&\approx
\sum_{i=1}^{2N}w_i\int_0^{2\pi}
\left(\irrf{\uvk(\nu,\vv{q})}\mu_i\right)
\left|\Phi_{\nu}^m(\uv_i)\right|^2\,d\va
\\
&=
2\pi\hat{k}_z(\nu q)\sum_{i=1}^{2N}w_i\mu_i
\left|\Phi_{\nu}^m(\uv_i)\right|^2.
\ea

\begin{proof}[Proof of Lemma \ref{modes:lem:orth}]
We begin by expressing (\ref{modes:phieq}) as
\ba
&
\left(1-\frac{\uv_i\cdot\uvk(\nu,\vv{q})}{\nu}\right)
\rrf{\uvk(\nu,\vv{q})}\Phi_{\nu}^m(\uv_i)
\\
&=
\varpi\sum_{l=0}^{l_{\rm max}}\sum_{m'=-l}^lg^lY_{lm'}(\uv_i)
\sum_{i'=1}^{2N}w_{i'}\int_0^{2\pi}Y_{lm'}^*(\uv_{i'})
\rrf{\uvk(\nu,\vv{q})}\Phi_{\nu}^m(\uv_{i'})\,d\va'.
\ea
Note that $\nu=\nu^m$ depends on $m$. Let us consider $\uvk_1=\uvk(\nu_1^{m_1},\vv{q})$ and $\uvk_2=\uvk(\nu_2^{m_2},\vv{q})$. We have
\ba
&
\left(\rrf{\uvk_2}\Phi_{\nu_2}^{m_2}(\uv_i)\right)
\rrf{\uvk_1}\left(1-\frac{\mu_i}{\nu_1}\right)\Phi_{\nu_1}^{m_1}(\uv_i)
\\
&=
\left(\rrf{\uvk_2}\Phi_{\nu_2}^{m_2}(\uv_i)\right)
\varpi\sum_{l=0}^{l_{\rm max}}\sum_{m=-l}^lg^lY_{lm}(\uv_i)
\sum_{i'=1}^{2N}w_{i'}\int_0^{2\pi}Y_{lm}^*(\uv_{i'})
\rrf{\uvk_1}\Phi_{\nu_1}^{m_1}(\uv_{i'})\,d\va',
\ea
and
\ba
&
\left(\rrf{\uvk_1}\Phi_{\nu_1}^{m_1}(\uv_i)\right)
\rrf{\uvk_2}\left(1-\frac{\mu_i}{\nu_2}\right)\Phi_{\nu_2}^{m_2}(\uv_i)
\\
&=
\left(\rrf{\uvk_1}\Phi_{\nu_1}^{m_1}(\uv_i)\right)
\varpi\sum_{l=0}^{l_{\rm max}}\sum_{m=-l}^lg^lY_{lm}^*(\uv_i)
\sum_{i'=1}^{2N}w_{i'}\int_0^{2\pi}Y_{lm}(\uv_{i'})
\rrf{\uvk_2}\Phi_{\nu_2}^{m_2}(\uv_{i'})\,d\va'.
\ea
By subtraction,
\ba
&
-\frac{1}{\nu_1}\left(\rrf{\uvk_2}\Phi_{\nu_2}^{m_2}(\uv_i)\right)
\rrf{\uvk_1}\mu_i\Phi_{\nu_1}^{m_1}(\uv_i)+
\frac{1}{\nu_2}\left(\rrf{\uvk_1}\Phi_{\nu_1}^{m_1}(\uv_i)\right)
\rrf{\uvk_2}\mu_i\Phi_{\nu_2}^{m_2}(\uv_i)
\\
&=
\left(\rrf{\uvk_2}\Phi_{\nu_2}^{m_2}(\uv_i)\right)
\varpi\sum_{l=0}^{l_{\rm max}}\sum_{m=-l}^lg^lY_{lm}(\uv_i)
\sum_{i'=1}^{2N}w_{i'}\int_0^{2\pi}Y_{lm}^*(\uv_{i'})
\rrf{\uvk_1}\Phi_{\nu_1}^{m_1}(\uv_{i'})\,d\va'
\\
&-
\left(\rrf{\uvk_1}\Phi_{\nu_1}^{m_1}(\uv_i)\right)
\varpi\sum_{l=0}^{l_{\rm max}}\sum_{m=-l}^lg^lY_{lm}^*(\uv_i)
\sum_{i'=1}^{2N}w_{i'}\int_0^{2\pi}Y_{lm}(\uv_{i'})
\rrf{\uvk_2}\Phi_{\nu_2}^{m_2}(\uv_{i'})\,d\va'.
\ea

Hence by summing both sides of the above two equations by $\sum_{i=1}^{2N}w_i\int_0^{2\pi}\,d\va$,
\ba
\left(\frac{\hat{k}_z(\nu_2q)}{\nu_2}-\frac{\hat{k}_z(\nu_1q)}{\nu_1}\right)
\sum_{i=1}^{2N}w_i\int_0^{2\pi}\mu_i\left(\rrf{\uvk_1}\Phi_{\nu_1}^{m_1}(\uv_i)\right)\left(\rrf{\uvk_2}\Phi_{\nu_2}^{m_2}(\uv_i)\right)\,d\va=0.
\ea
Suppose $\nu=\nu_1=\nu_2$ but $m_1\neq m_2$. In this case, $\uvk=\uvk_1=\uvk_2$ and
\ba
&
\sum_{i=1}^{2N}w_i\int_0^{2\pi}\mu_i\left(\rrf{\uvk}\Phi_{\nu}^{m_1}(\uv_i)\right)\left(\rrf{\uvk}\Phi_{\nu}^{m_2}(\uv_i)\right)\,d\va
\\
&=
\sum_{i=1}^{2N}w_i\int_0^{2\pi}\mu_i\left(\rrf{\uvk}\Phi_{\nu}^{m_1}(\uv_i)\Phi_{\nu}^{m_2}(\uv_i)\right)\,d\va
\propto\delta_{m_1,-m_2}.
\ea
Noting that $\nu^{-m}=\nu^m$ and $\Phi_{\nu}^{-m}(\uv_i)=\Phi_{\nu}^{m*}(\uv_i)$, we have
\be
\sum_{i=1}^{2N}w_i\int_0^{2\pi}\mu_i\left(\rrf{\uvk_1}\Phi_{\nu_1}^{m_1}(\uv_i)\right)\left(\rrf{\uvk_2}\Phi_{\nu_2}^{m_2*}(\uv_i)\right)\,d\va
\propto
\delta_{\nu_1\nu_2}\delta_{m_1m_2}.
\ee
\end{proof}

Let us calculate eigenvalues. Equation (\ref{homo1d}) can be rewritten as ($i=1,\dots,2N$, $-l_{\rm max}\le m\le l_{\rm max}$)
\be
\left(1-\frac{\mu_i}{\nu}\right)\phi^m(\nu,\mu_i)=
\frac{\varpi}{2}\sum_{l'=|m|}^{l_{\rm max}}{\rm g}^{l'}(2l'+1)p_{l'}^m(\mu_i)
\sum_{i'=1}^{2N}w_{i'}p_{l'}^m(\mu_{i'})\left(1-\mu_{i'}^2\right)^{|m|}\phi^m(\nu,\mu_{i'}).
\ee
Hence for $i=1,\dots,N$,
\ba
\left(1-\frac{\mu_i}{\nu}\right)\phi^m(\nu,\mu_i)
&=
\frac{\varpi}{2}\sum_{l'=|m|}^{l_{\rm max}}{\rm g}^{l'}(2l'+1)p_{l'}^m(\mu_i)
\\
&\times
\sum_{i'=1}^Nw_{i'}p_{l'}^m(\mu_{i'})\left(1-\mu_{i'}^2\right)^{|m|}
\left[\phi^m(\nu,\mu_{i'})+(-1)^{l'+m}\phi^m(\nu,-\mu_{i'})\right],
\ea
\ba
\left(1+\frac{\mu_i}{\nu}\right)\phi^m(\nu,-\mu_i)
&=
\frac{\varpi}{2}\sum_{l'=|m|}^{l_{\rm max}}{\rm g}^{l'}(2l'+1)p_{l'}^m(\mu_i)
\\
&\times
\sum_{i'=1}^Nw_{i'}p_{l'}^m(\mu_{i'})\left(1-\mu_{i'}^2\right)^{|m|}
\left[(-1)^{l'+m}\phi^m(\nu,\mu_{i'})+\phi^m(\nu,-\mu_{i'})\right].
\ea
We arrive at the following matrix-vector equation.
\ba
\left(I_N-\frac{1}{\nu}\Xi\right)\vv{\Phi}_+^m(\nu)
&=
\frac{\varpi}{2}\left[W_+^m\vv{\Phi}_+^m(\nu)+W_-^m\vv{\Phi}_-^m(\nu)\right],
\\
\left(I_N+\frac{1}{\nu}\Xi\right)\vv{\Phi}_-^m(\nu)
&=
\frac{\varpi}{2}\left[W_-^m\vv{\Phi}_+^m(\nu)+W_+^m\vv{\Phi}_-^m(\nu)\right],
\ea
where $I_N$ is the $N$-dimensional identity matrix. Matrix $\Xi$ and vectors $\vv{\Phi}_{\pm}^m$ are defined as
\be
\Xi
=\begin{pmatrix}\mu_1&& \\ &\ddots& \\ &&\mu_N\end{pmatrix},
\quad
\vv{\Phi}_{\pm}^m(\nu)
=\begin{pmatrix}
\phi^m(\nu,\pm\mu_1) \\ \vdots\\ \phi^m(\nu,\pm\mu_N)
\end{pmatrix}.
\ee
Elements of the matrices $W_{\pm}^m$ are given by
\ba
\{W_{\pm}^m\}_{ij}
&=
w_j\sum_{l=|m|}^{l_{\rm max}}(2l+1){\rm g}^lp_l^m(\pm\mu_i)p_l^m(\mu_j)
\left(1-\mu_j^2\right)^{|m|}
\\
&=
w_j\sum_{l=|m|}^{l_{\rm max}}(2l+1){\rm g}^l\frac{(l-m)!}{(l+m)!}
P_l^m(\pm\mu_i)P_l^m(\mu_j)
\left(1-\mu_i^2\right)^{-|m|/2}\left(1-\mu_j^2\right)^{|m|/2}
\\
&=
4\pi w_j\sum_{l=|m|}^{l_{\rm max}}{\rm g}^l
\frac{\left(1-\mu_j^2\right)^{|m|/2}}{\left(1-\mu_i^2\right)^{|m|/2}}
Y_{lm}(\pm\mu_i,0)Y_{lm}(\mu_j,0),
\ea
where we wrote $Y_{lm}(\uv_i)=Y_{lm}(\mu_i,\va)$. The fact $\vv{\Phi}_{\pm}^m(-\nu)=\vv{\Phi}_{\mp}^m(\nu)$ implies that $-\nu$ is an eigenvalue if $\nu$ is an eigenvalue.

According to \cite{Barichello11,Siewert00}, we introduce
\be
\vv{U}^m(\nu)=\vv{\Phi}_+^m(\nu)+\vv{\Phi}_-^m(\nu),\quad
\vv{V}^m(\nu)=\vv{\Phi}_+^m(\nu)-\vv{\Phi}_-^m(\nu).
\ee
By adding and subtracting two equations we obtain
\ba
&
\vv{U}^m(\nu)-\frac{1}{\nu}\Xi\vv{V}^m(\nu)=\frac{\varpi}{2}(W_+^m+W_-^m)\vv{U}^m(\nu),
\\
&
\vv{V}^m(\nu)-\frac{1}{\nu}\Xi\vv{U}^m(\nu)=\frac{\varpi}{2}(W_+^m-W_-^m)\vv{V}^m(\nu).
\ea
Hence we arrive at \cite{Barichello-Siewert00,Siewert-Wright99}
\begin{equation}
E_-^mE_+^m\Xi\vv{U}^m=\frac{1}{\nu^2}\Xi\vv{U}^m,
\label{evalsys}
\end{equation}
where
\be
E_{\pm}^m=\left[I_N-\frac{\varpi}{2}(W_+^m\pm W_-^m)\right]\Xi^{-1}.
\ee
We note that (\ref{evalsys}) is not the only matrix-vector equation for $1/\nu^2$ (see \cite{Barichello-Siewert99b}). From (\ref{evalsys}), the eigenvalues are obtained as
\be
\nu=\nu_n^m>0\quad(n=1,\dots,N,\;m=0,\dots,l_{\rm max})
\ee
with the relation $\nu_n^{-m}=\nu_n^m$. In addition, $-\nu_n^m$ are also eigenvalues.

\section{Scattering term}
\label{Ispart}

Let $\va_0$ be the azimuthal angle of $\uv_{i_0}$ for some $i_0$. By setting $\widetilde{S}=\delta(z-z')\delta_{ii_0}\delta(\va-\va_0)$ in (\ref{intro:I2do}) for $z'\in\Rm$, let us consider the fundamental solution $G_{\vv{q}}(z,\uv;z',\uv')$:
\begin{equation}
\begin{aligned}
&
\left(\mu_i\frac{\pp}{\pp z}+1+i\vv{\omega}_i\cdot\vv{q}\right)G_{\vv{q}}(z,\uv_i;z',\uv_{i_0})
\\
&=
\varpi\sum_{i'=1}^{2N}w_{i'}\int_0^{2\pi}p(\uv_i,\uv_{i'})G_{\vv{q}}(z,\uv_{i'};z',\uv_{i_0})\,d\va+
\delta(z-z')\delta_{ii_0}\delta(\va-\va_0)
\label{GreenRTE}
\end{aligned}
\end{equation}
for $z\in\Rm$, $1\le i\le 2N$, $\va\in[0,2\pi)$. We obtain (see Appendix \ref{green})
\begin{equation}
\begin{aligned}
G_{\vv{q}}(z,\uv_i;z',\uv_{i_0})
&=
\sum_{m=-l_{\rm max}}^{l_{\rm max}}\sum_{n=1}^N
\frac{w_{i_0}}{\mathcal{N}(\nu_n^m,q)}e^{\mp\hat{k}_z(\nu_n^mq)(z-z')/\nu_n^m}
\\
&\times
\left(\rrf{\uvk_{\pm}}\Phi_{\pm\nu_n^m}^m(\uv_i)\right)
\left(\rrf{\uvk_{\pm}}\Phi_{\pm\nu_n^m}^{m*}(\uv_{i_0})\right)
\end{aligned}
\label{gfunc1}
\end{equation}
where upper (lower) signs are taken for $z>z'$ ($z<z'$) and
\be
\uvk_{\pm}=\uvk(\pm\nu_n^m,\vv{q}).
\ee
Let us define
\be
C(\tau;\zeta,\eta)=\frac{e^{-\tau/\zeta}-e^{-\tau/\eta}}{\zeta-\eta}.
\ee
We note that $C(-\tau;-\zeta,-\eta)=-C(\tau;\zeta,\eta)$ and
\be
C(\tau;\eta,\eta)=\frac{\tau}{\eta^2}.
\ee
The solution $\check{I}_s$ to (\ref{intro:I2do}) is obtained as
\begin{equation}
\begin{aligned}
&
\check{I}_s(\vv{q},z,\uv_i)=
\sum_{i'=1}^{2N}\int_0^{2\pi}\int_{-\infty}^{\infty}G_{\vv{q}}(z,\uv_i;z',\uv_{i'})\widetilde{S}(\vv{q},z',\uv_{i'})\,dz'd\va'
\\
&=
\frac{\varpi\mu_t^2}{|\mu_0|}
\int_{-\infty}^{\infty}\Theta(\mu_0z')\left[\sum_{i'=1}^{2N}
\int_0^{2\pi}G_{\vv{q}}(z,\uv_i;z',\uv_{i'})p(\uv_{i'},\uv_{i_0})\,d\va'\right]
e^{-(1+i\vv{q}\cdot\vv{\omega}_0)z'/\mu_0}\,dz'.
\end{aligned}
\label{eqIs2}
\end{equation}
Since $p(\uv_{i'},\uv_{i_0})=p(\rrf{\uvk}\uv_{i'},\rrf{\uvk}\uv_{i_0})$ for a unit vector $\uvk\in\Cm$, we have
\be
\sum_{l=0}^{l_{\rm max}}\sum_{m=-l}^l{\rm g}^lY_{lm}(\uv_{i'})Y_{lm}^*(\uv_{i_0})=
\sum_{l=0}^{l_{\rm max}}\sum_{m=-l}^l{\rm g}^l\left(\rrf{\uvk}Y_{lm}(\uv_{i'})\right)\left(\rrf{\uvk}Y_{lm}^*(\uv_{i_0})\right).
\ee
We obtain
\ba
&
\sum_{i'=1}^{2N}\int_0^{2\pi}G_{\vv{q}}(z,\uv_i;z',\uv_{i'})
p(\uv_{i'},\uv_{i_0})\,d\va'
\\
&=
\sum_{m=-l_{\rm max}}^{l_{\rm max}}\sum_{n=1}^N\frac{1}{\mathcal{N}(\nu_n^m,q)}
e^{\mp\hat{k}_z(\nu_n^mq)(z-z')/\nu_n^m}
\left(\rrf{\uvk_{\pm}}\Phi_{\pm\nu_n^m}^m(\uv_i)\right)
\\
&\times
\sum_{l=0}^{l_{\rm max}}\sum_{m'=-l}^l{\rm g}^l\left(\rrf{\uvk_{\pm}}Y_{lm'}^*(\uv_{i_0})\right)
\sum_{i'=1}^{2N}w_{i'}\int_0^{2\pi}
\left(\rrf{\uvk_{\pm}}\Phi_{\pm\nu_n^m}^{m*}(\uv_{i'})\right)
\left(\rrf{\uvk_{\pm}}Y_{lm'}(\uv_{i'})\right)\,d\va'
\\
&\approx
\sum_{m=-l_{\rm max}}^{l_{\rm max}}\sum_{n=1}^N\frac{1}{\mathcal{N}(\nu_n^m,q)}
e^{\mp\hat{k}_z(\nu_n^mq)(z-z')/\nu_n^m}
\left(\rrf{\uvk_{\pm}}\Phi_{\pm\nu_n^m}^m(\uv_i)\right)
\\
&\times
\sum_{l=0}^{l_{\rm max}}\sum_{m'=-l}^l{\rm g}^l\left(\rrf{\uvk_{\pm}}Y_{lm'}^*(\uv_{i_0})\right)
\sum_{i'=1}^{2N}w_{i'}\int_0^{2\pi}\Phi_{\pm\nu_n^m}^{m*}(\uv_{i'})Y_{lm'}(\uv_{i'})\,d\va'.
\ea
To reach the rightmost side of the above equation, we used the fact that the discretized integral over all angles is numerically unchanged about the rotation of the reference frame for sufficiently large $N$. Hence,
\begin{equation}
\begin{aligned}
&
\sum_{i'=1}^{2N}\int_0^{2\pi}G_{\vv{q}}(z,\uv_i;z',\uv_{i'})
p(\uv_{i'},\uv_{i_0})\,d\va'
\\
&\approx
\sum_{m=-l_{\rm max}}^{l_{\rm max}}\sum_{n=1}^N\frac{1}{\mathcal{N}(\nu_n^m,q)}
e^{\mp\hat{k}_z(\nu_n^mq)(z-z')/\nu_n^m}
\left(\rrf{\uvk_{\pm}}\Phi_{\pm\nu_n^m}^m(\uv_i)\right)
\\
&\times
\sum_{l=0}^{l_{\rm max}}\sum_{m'=-l}^l{\rm g}^l\rrf{\uvk_{\pm}}Y_{lm'}^*(\uv_{i_0})
\sum_{i'=1}^{2N}w_{i'}\int_0^{2\pi}\Phi_{\pm\nu_n^m}^{m*}(\uv_{i'})Y_{lm'}(\uv_{i'})\,d\va'
\\
&=
\sum_{m=-l_{\rm max}}^{l_{\rm max}}\sum_{n=1}^N\frac{(-1)^m}{\mathcal{N}(\nu_n^m,q)}
e^{\mp\hat{k}_z(\nu_n^mq)(z-z')/\nu_n^m}
\left(\rrf{\uvk_{\pm}}\Phi_{\pm\nu_n^m}^m(\uv_i)\right)
\\
&\times
\sum_{l=|m|}^{l_{\rm max}}\sqrt{(2l+1)\pi}{\rm g}^lg_l^m(\pm\nu_n^m)
\left(\rrf{\uvk_{\pm}}Y_{lm}^*(\uv_{i_0})\right),
\end{aligned}
\label{intGp}
\end{equation}
where we used (\ref{intYPhi}). 

Hereafter we assume
\begin{equation}
\uv_0=\hvv{z}={^t}(0,0,1).
\label{uv0isz}
\end{equation}
Since
\be
\rrf{\uvk_{\pm}}Y_{lm}(\hvv{z})=
\sqrt{\frac{2l+1}{4\pi}}d_{0m}^l[i\tau(\nu_nq)],
\ee
we have
\be
\rrf{\uvk_{\pm}}Y_{lm}^*(\uv_{i_0})=
\rrf{\uvk(\pm\nu_n^m,\vv{q})}Y_{lm}^*(\hvv{z})=
\sqrt{\frac{2l+1}{4\pi}}d_{0m}^l[i\tau(\nu_n^mq)].
\ee
If (\ref{intGp}) is substituted into (\ref{eqIs2}), $\check{I}_s(\vv{q},z,\uv_i)$ is calculated as
\ba
&
\check{I}_s(\vv{q},z,\uv_i)
\\
&=
\frac{\varpi\mu_t^2}{2}
\int_{-\infty}^z\Theta(z')\Biggl[
\sum_{m=-l_{\rm max}}^{l_{\rm max}}\sum_{n=1}^N\frac{(-1)^m}{\mathcal{N}(\nu_n^m,q)}
e^{-\hat{k}_z(\nu_n^mq)(z-z')/\nu_n^m}
\left(\rrf{\uvk_+}\Phi_{\nu_n^m}^m(\uv_i)\right)
\\
&\times
\sum_{l=|m|}^{l_{\rm max}}(2l+1){\rm g}^lg_l^m(\nu_n^m)d_{0m}^l[i\tau(\nu_n^mq)]
\Biggr]
e^{-z'/\mu_0}\,dz'
\\
&+
\frac{\varpi\mu_t^2}{2}
\int_z^{\infty}\Theta(z')\Biggl[
\sum_{m=-l_{\rm max}}^{l_{\rm max}}\sum_{n=1}^N\frac{(-1)^m}{\mathcal{N}(\nu_n^m,q)}
e^{\hat{k}_z(\nu_n^mq)(z-z')/\nu_n^m}
\left(\rrf{\uvk_-}\Phi_{-\nu_n^m}^m(\uv_i)\right)
\\
&\times
\sum_{l=|m|}^{l_{\rm max}}(2l+1){\rm g}^lg_l^m(-\nu_n^m)d_{0m}^l[i\tau(\nu_n^mq)]
\Biggr]
e^{-z'/\mu_0}\,dz'.
\ea
In the above equation, we assumed that $N$ is large enough that the approximation in (\ref{intGp}) is numerically exact. Thus we have
\begin{equation}
\check{I}_s(\vv{q},z,\uv_i)=
\frac{\varpi\mu_t^2}{2}\sum_{m=-l_{\rm max}}^{l_{\rm max}}\sum_{n=1}^N\frac{(-1)^m\nu_n^m}{\mathcal{N}(\nu_n^m,q)\hat{k}_z(\nu_n^mq)}
\left[\Theta(z)\left(\psi_1+\psi_2\right)+\left(1-\Theta(z)\right)\psi_3\right],
\label{search:psip}
\end{equation}
where
\be
\psi_1=
C\left(z;1,\frac{\nu_n^m}{\hat{k}_z(\nu_n^mq)}\right)
\left(\rrf{\uvk_+}\Phi_{\nu_n^m}^m(\uv_i)\right)
\sum_{l=|m|}^{l_{\rm max}}(2l+1){\rm g}^lg_l^m(\nu_n^m)d_{0m}^l[i\tau(\nu_nq)],
\ee
\be
\psi_2=
\frac{e^{-z}\hat{k}_z(\nu_n^mq)}{\nu_n^m+\hat{k}_z(\nu_n^mq)}
\left(\rrf{\uvk_-}\Phi_{-\nu_n^m}^m(\uv_i)\right)
\sum_{l=|m|}^{l_{\rm max}}(2l+1){\rm g}^lg_l^m(-\nu_n^m)
d_{0m}^l[i\tau(\nu_nq)],
\ee
\be
\psi_3=
\frac{e^{\hat{k}_z(\nu_n^mq)z/\nu_n^m}\hat{k}_z(\nu_n^mq)}{\nu_n^m+\hat{k}_z(\nu_n^mq)}
\left(\rrf{\uvk_-}\Phi_{-\nu_n^m}^m(\uv_i)\right)
\sum_{l=|m|}^{l_{\rm max}}(2l+1){\rm g}^lg_l^m(-\nu_n^m)
d_{0m}^l[i\tau(\nu_nq)].
\ee

\section{Energy density}
\label{eden}

In this section we return to the original unit of length and put the subscript $*$ for the scaled space variables. We suppose $\vv{\rho}\neq\vv{0}$. The energy density $U(\vv{r})$ is given by
\begin{equation}
U(\vv{r})=\int_{\Sm^2}I(\vv{r},\uv)\,d\uv.
\label{defU}
\end{equation}
Noting $\widetilde{I}_s\approx\check{I}_s$ by the discretization of the cosine $\mu$ of the polar angle, we have
\ba
U(\vv{r})
&=
\int_{\Sm^2}I_b(\vv{r},\uv)\,d\uv+
\frac{1}{(2\pi)^2}\int_{\Rm^2}e^{i\vv{q}\cdot\vv{\rho}\mu_t}\int_{\Sm^2}\widetilde{I}_s(\vv{q},z_*,\uv)\,d\uv d\vv{q}
\\
&\approx
\frac{1}{(2\pi)^2}\int_{\Rm^2}e^{i\vv{q}\cdot\vv{\rho}\mu_t}\sum_{i=1}^{2N}w_i\int_0^{2\pi}\check{I}_s(\vv{q},z_*,\uv_i)\,d\va d\vv{q}
\\
&=
\frac{1}{2\pi}\int_0^{\infty}qJ_0(q\rho_*)\sum_{i=1}^{2N}w_i\int_0^{2\pi}\check{I}_s(\vv{q},z_*,\uv_i)\,d\va dq
\\
&=
U(\rho,z),
\ea
where $\rho_*=\mu_t\rho$, $z_*=\mu_tz$, and we used the Hansen-Bessel formula
\be
J_0(x)=\frac{1}{2\pi}\int_0^{2\pi}e^{ix\cos\va}\,d\va
\ee
with $J_0(x)$ ($x\ge0$) the Bessel function of order $0$. Note that
\be
\sum_{i=1}^{2N}w_i\int_0^{2\pi}\rrf{\uvk(\nu,\vv{q})}\Phi_{\nu}^m(\uv_i)\,d\va
\approx2\pi\delta_{m0}
\ee
for sufficiently large $N$ because $\int_{\Sm^2}\rrf{\uvk}\Phi_{\nu}^m(\uv)\,d\uv=\int_{\Sm^2}\Phi_{\nu}^m(\uv)\,d\uv$. Within this approximation we have
\begin{equation}
U(\rho,z)=
\frac{\varpi\mu_t^2}{2}\int_0^{\infty}qJ_0(q\rho_*)F(q,z_*)\,dq,
\label{Uintq}
\end{equation}
where
\ba
F(q,z_*)
&=
\sum_{n=1}^N\frac{\nu_n^0}{\mathcal{N}(\nu_n^0,q)}
\Biggl[
\Theta(z_*)\frac{e^{-z_*}-e^{-\hat{k}_z(\nu_n^0q)z_*/\nu_n^0}}{\hat{k}_z(\nu_n^0q)-\nu_n^0}
\sum_{l=0}^{l_{\rm max}}(2l+1){\rm g}^lg_l^0(\nu_n^0)d_{00}^l[i\tau(\nu_nq)]
\\
&+
\Theta(z_*)\frac{e^{-z_*}}{\hat{k}_z(\nu_n^0q)+\nu_n^0}
\sum_{l=0}^{l_{\rm max}}(-1)^l(2l+1){\rm g}^lg_l^0(\nu_n^0)
d_{00}^l[i\tau(\nu_nq)]
\\
&+
\left(1-\Theta(z_*)\right)
\frac{e^{\hat{k}_z(\nu_n^0q)z_*/\nu_n^0}}{\hat{k}_z(\nu_n^0q)+\nu_n^0}
\sum_{l=0}^{l_{\rm max}}(-1)^l(2l+1){\rm g}^lg_l^0(\nu_n^0)
d_{00}^l[i\tau(\nu_nq)]
\Biggr].
\ea

For the purpose of numerical calculation, we use the asymptotic expression of the Bessel function:
\ba
&
J_0(x)=\sqrt{\frac{2}{\pi x}}
\\
&\times
\left[
\left(1-\frac{9}{128x^2}+O(x^{-4})\right)\cos\left(x-\frac{\pi}{4}\right)+
\left(\frac{1}{8x}-\frac{75}{1024x^3}+O(x^{-5})\right)\sin\left(x-\frac{\pi}{4}\right)
\right].
\ea
We define
\ba
&
d(q,\rho_*)=
qJ_0(q\rho_*)-\sqrt{\frac{2q}{\pi\rho_*}}
\\
&\times
\left[
\left(1-\frac{9}{128(q\rho_*)^2}\right)\cos\left(q\rho_*-\frac{\pi}{4}\right)+
\left(\frac{1}{8q\rho_*}-\frac{75}{1024(q\rho_*)^3}\right)\sin\left(q\rho_*-\frac{\pi}{4}\right)
\right].
\ea
Let us set
\be
a=\frac{\pi}{4\rho_*}.
\ee
We have
\begin{equation}
\begin{aligned}
U(\rho,z)
&=
\frac{\varpi\mu_t^2}{2}\int_0^aqJ_0(q\rho_*)F(q,z_*)\,dq+
\frac{\varpi\mu_t^2}{2}\int_a^{\infty}d(q,\rho_*)F(q,z_*)\,dq
\\
&+
\frac{\varpi\mu_t^2}{\sqrt{2\pi\rho_*}}\int_0^{\infty}\left[F_1(s,\vv{r}_*)+F_2(s,\vv{r}_*)\right]\,ds,
\end{aligned}
\label{final_U}
\end{equation}
where $s=q-a$,
\ba
F_1(s,\vv{r}_*)
&=
\sqrt{s+\frac{\pi}{4\rho_*}}\left(
1-\frac{9}{128(s\rho_*+\frac{\pi}{4})^2}\right)
F\left(s+\frac{\pi}{4\rho_*},z_*\right)\cos(s\rho_*),
\\
F_2(s,\vv{r}_*)
&=
\sqrt{s+\frac{\pi}{4\rho_*}}\left(
\frac{1}{8(s\rho_*+\frac{\pi}{4})}-\frac{75}{1024(s\rho_*+\frac{\pi}{4})^3}
\right)
F\left(s+\frac{\pi}{4\rho_*},z_*\right)\sin(s\rho_*).
\ea
The integrals $\int_0^{\infty}F_1(q)\,dq$ and $\int_0^{\infty}F_2(q)\,dq$ can be evaluated by the double-exponential formula \cite{Ooura-Mori91,Ooura-Mori99,Ogata05}. Define
\be
\phi(t)=\frac{t}{1-e^{-6\sinh{t}}}
\ee
with
\be
\phi'(t)=\frac{1-(1+6t\cosh{t})e^{-6\sinh{t}}}{\left(1-e^{-6\sinh{t}}\right)^2}.
\ee
With the approximation of the double-exponential formula, we can write
\ba
&
\int_0^{\infty}F_1(s,\vv{r}_*)\,ds\approx
\frac{\pi}{\rho_*}\sum_{k=-N_k}^{N_k}F_1\left(\frac{\pi}{h\rho_*}\phi\left(kh+\frac{h}{2}\right),\vv{r}_*\right)
\phi'\left(kh+\frac{h}{2}\right),
\\
&
\int_0^{\infty}F_2(s,\vv{r}_*)\,ds\approx
\frac{\pi}{\rho_*}\sum_{k=-N_k}^{N_k}F_2\left(\frac{\pi}{h\rho_*}\phi(kh),\vv{r}_*\right)
\phi'(kh),
\ea
where $N_k>0$ is an integer and $h$ is a mesh size.

\section{Numerical tests}
\label{num}

Let us compute $U(\vv{r})=U(\rho,z)=\int_{\Sm^2}I(\vv{r},\uv)\,d\uv$ in (\ref{defU}), where $I(\vv{r},\uv)$ is the solution of the radiative transport equation (\ref{rte0}) for the source term $f(\vv{r},\uv)=\delta(\vv{\rho})\delta(z)\delta(\uv-\uv_0)$ given in (\ref{source1}). As was assumed in (\ref{uv0isz}), $\uv_0=\hvv{z}$.

\begin{figure}[ht]
\begin{center}
\includegraphics[width=0.45\textwidth]{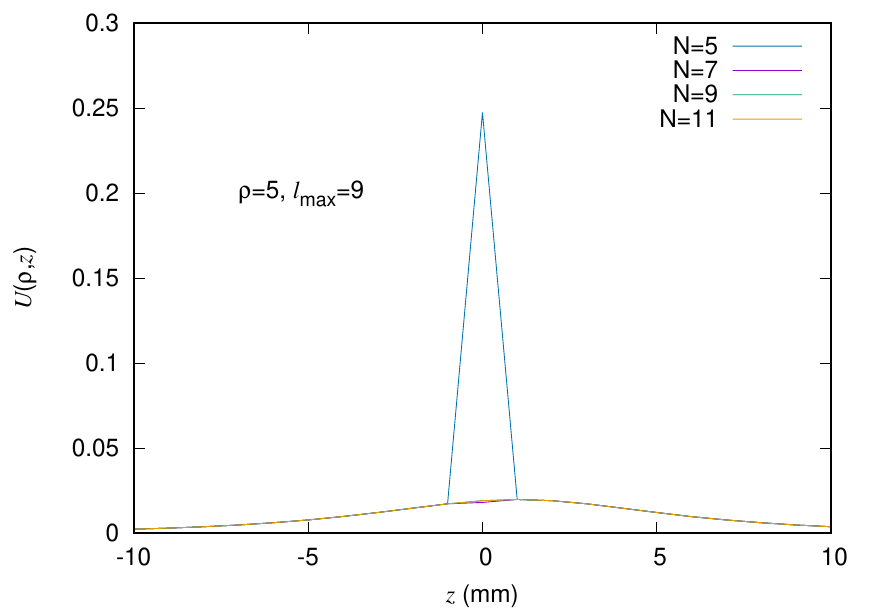}
\includegraphics[width=0.45\textwidth]{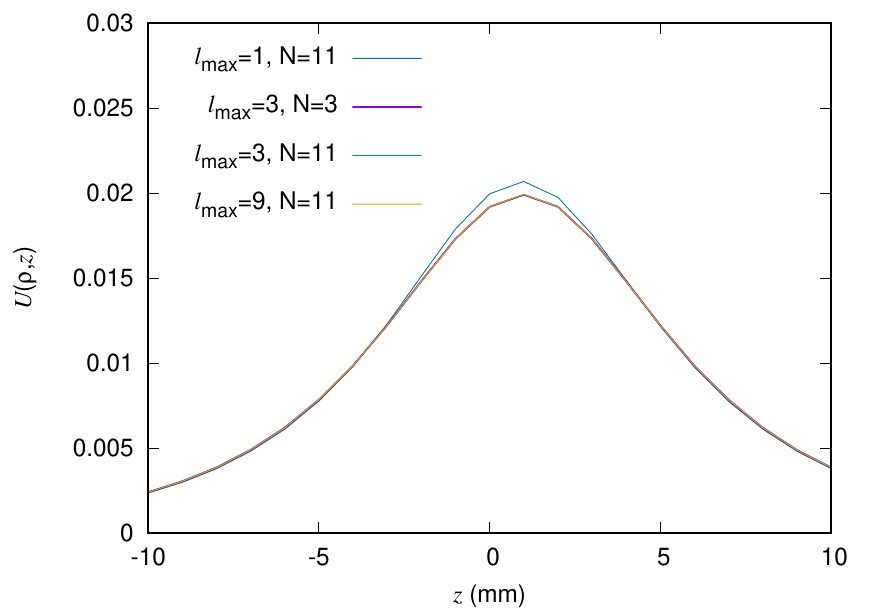}
\end{center}
\caption{
The energy density $U(\rho,z)$ in (\ref{final_U}) is plotted for $\rho=5\,{\rm mm}$. The parameters are set to $\mu_a=0.01$, $\mu_s=10$, and ${\rm g}=0.9$. (Left) Dependence on $N$ for $l_{\rm max}=9$. (Right) Convergence behavior.
}
\label{figure3}
\end{figure}

\begin{figure}[ht]
\begin{center}
\includegraphics[width=0.45\textwidth]{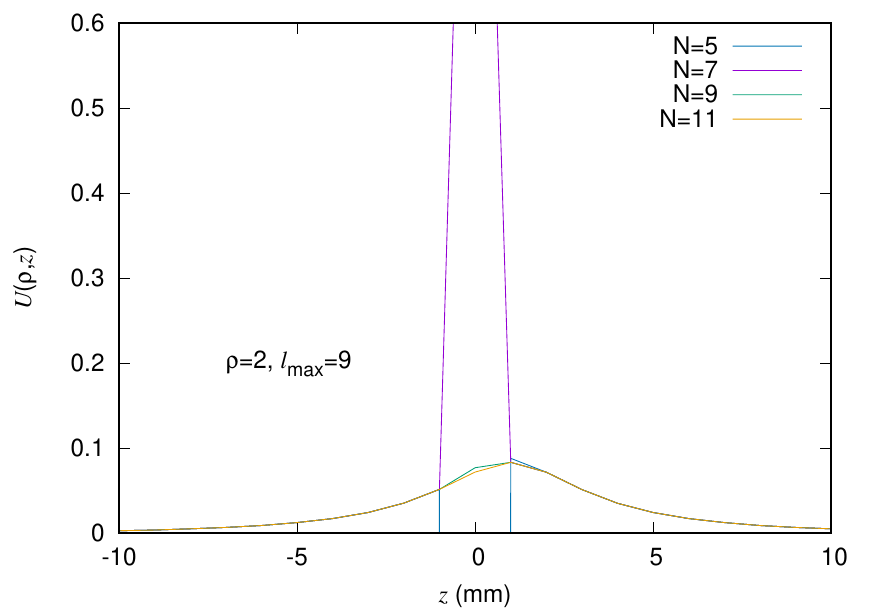}
\includegraphics[width=0.45\textwidth]{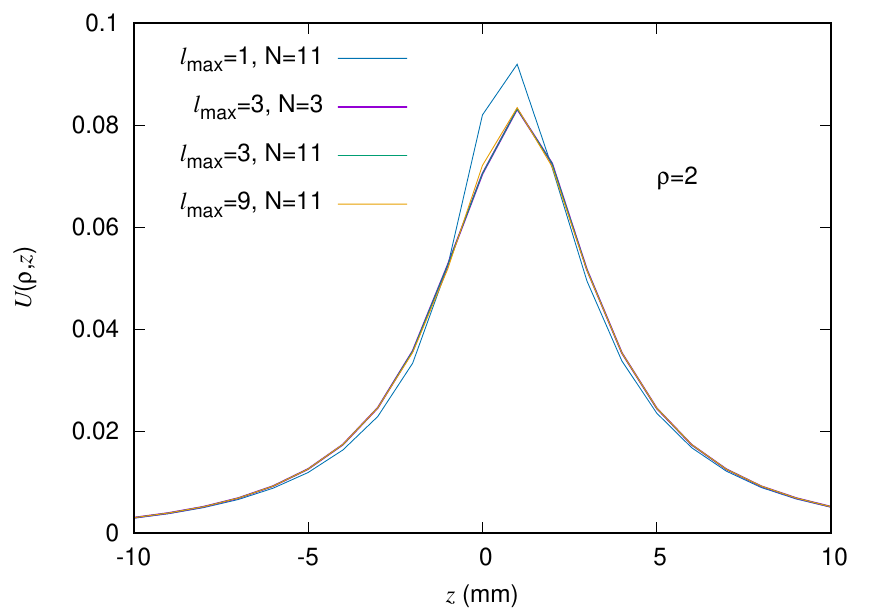}
\end{center}
\caption{
Same as Fig.~\ref{figure3} but $\rho=2\,{\rm mm}$. (Left) Dependence on $N$ for $l_{\rm max}=9$. (Right) Convergence behavior.
}
\label{figure4}
\end{figure}

Numerical calculation was conducted for
\be
\mu_a=0.01\,{\rm mm}^{-1},\quad\mu_s=10\,{\rm mm}^{-1},\quad
{\rm g}=0\;\mbox{or}\;0.9.
\ee
When the integral (\ref{Uintq}) over $q$ is computed for $U(\rho,z)$ making use of the double-exponential formula, we set $h=0.01$, and $N_k=400$. The trapezoidal rule was used for the first two integrals in (\ref{final_U}) with $10^4$ trapezoids. The upper limit of the second integral was set to $1$. We took $101$ points on the $z$-axis between $-50\,{\rm mm}$ and $50\,{\rm mm}$ (${\rm g}=0.9$) or $-5\,{\rm mm}$ and $5\,{\rm mm}$ (${\rm g}=0$). Results are shown in Figs.~\ref{figure3} through \ref{figure2}. 

We set ${\rm g}=0.9$ in Figs.~\ref{figure3} and \ref{figure4}. We first set $\rho=5\,{\rm mm}$. In Fig.~\ref{figure3}(a), $U(\rho,z)$ in (\ref{final_U}) is plotted for $l_{\rm max}=9$. A smooth curve is obtained when $N=9$ and $N=11$. Figure \ref{figure3}(b) shows that the calculation with $l_{\rm max}=3$ and $N=3$ is almost identical to the result for $l_{\rm max}=9$ and $N=11$. Using FORTRAN (gfortran), the computation time was $5.6\,{\rm sec}$ for $l_{\rm max}=9$, $N=11$ and $0.5\,{\rm sec}$ for $l_{\rm max}=3$, $N=3$ on a laptop computer (Apple, MacBook Pro, $2.3\,{\rm GHz}$ Intel Core i5). Next we set $\rho=2\,{\rm mm}$. Figure \ref{figure4}(a) shows that only the case of $N\ge11$ is successful when $l_{\rm max}=9$. In Fig.~\ref{figure4}(b), the result for $l_{\rm max}=3$, $N=3$ is close to the curve for $l_{\rm max}=9$, $N=11$, which gives the converged curve.

In Figs.~\ref{figure1} and \ref{figure2}, $U(\rho,z)$ in (\ref{final_U}) is compared to Monte Carlo simulation. The Monte-Carlo RTE solver MC \cite{MarkelMC} was used with the number of photons $10^8$. The Monte Carlo simulation uses the Henyey-Greenstein model ($l_{\rm max}=\infty$). Parameters were chosen as $l_{\rm max}=3$, $N=3$. In Fig.~\ref{figure1}, we set ${\rm g}=0.9$. In Fig.~\ref{figure2}, ${\rm g}$ is set to $0$. In both figures, $U(\rho,z)$ are plotted as functions of $z$ for different $\rho$. Good agreement is seen in each line of Figs.~\ref{figure1} and \ref{figure2}.

\begin{figure}[ht]
\begin{center}
\includegraphics[width=0.7\textwidth]{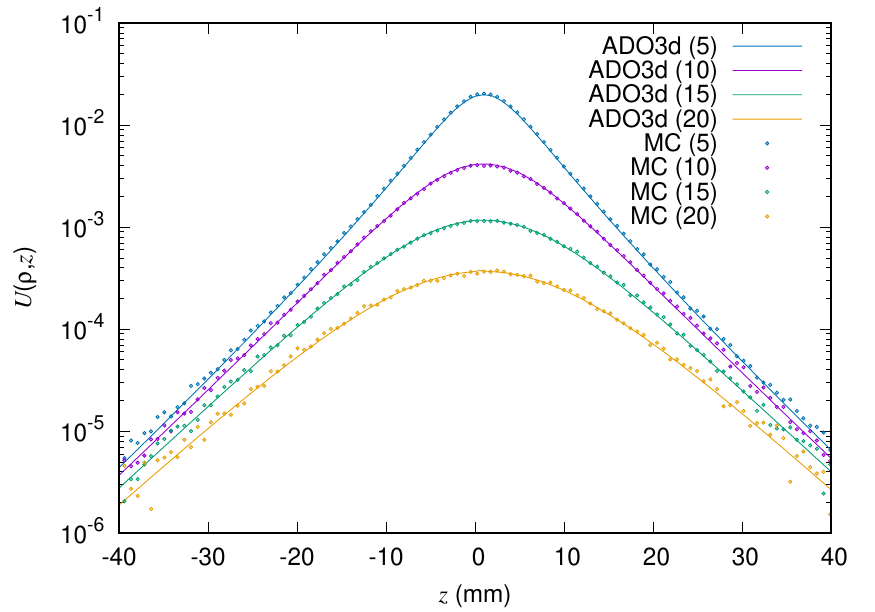}
\end{center}
\caption{
The energy density $U(\rho,z)$ is plotted. Results from (\ref{final_U}) are compared to Monte Carlo simulation. From the top, $\rho=5$, $10$, $15$, and $20\,{\rm mm}$. The parameters are set to $\mu_a=0.01$, $\mu_s=10$, and ${\rm g}=0.9$.
}
\label{figure1}
\end{figure}

\begin{figure}[ht]
\begin{center}
\includegraphics[width=0.7\textwidth]{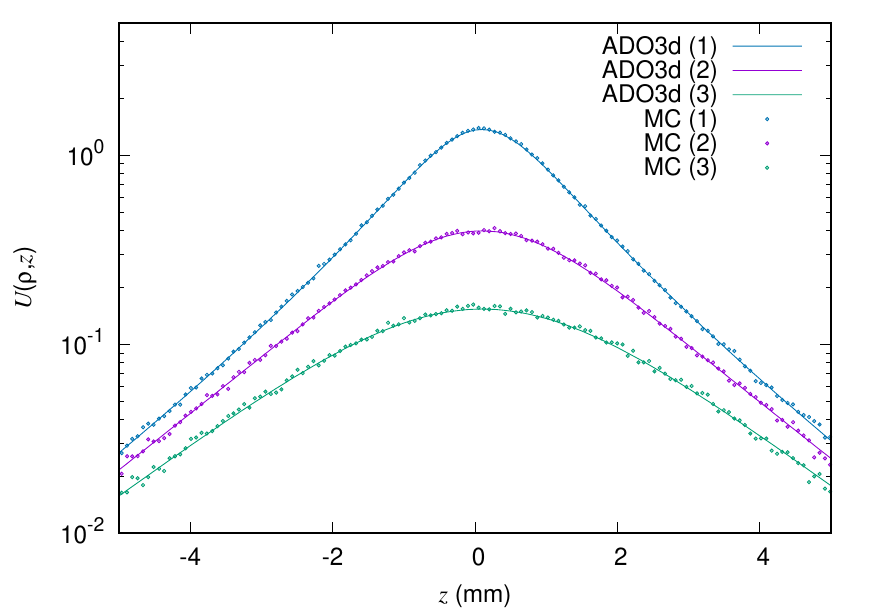}
\end{center}
\caption{
Same as Fig.~\ref{figure1}, but ${\rm g}=0$. From the top, $\rho=1$, $2$, and $3\,{\rm mm}$.
}
\label{figure2}
\end{figure}

\section{Concluding remarks}
\label{concl}

In one dimension, the equivalence between the discrete-ordinates method and spherical-harmonics method is known \cite{Case-Zweifel,Barichello-Siewert98}. In three dimensions, eigenmodes of the method of rotated reference frames \cite{Markel04} are obtained by expanding singular eigenfunctions with spherical harmonics \cite{Barichello-Garcia-Siewert98} and then rotating the resulting expressions. However, the series expansion in the method of rotated reference frames diverges because components of $\uvk$ become very large even if $\uvk\cdot\uvk=1$ always holds.

Kim and Keller \cite{Kim-Keller03} and Kim \cite{Kim04} established discrete-ordinates with plane-wave decomposition for the three-dimensional radiative transport equation.  Their method can be viewed as separation of variables assuming the form
\be
\check{I}(\vv{q},z,\uv_i)=V_{\lambda}(\uv_i,\vv{q})e^{\lambda(\vv{q})z}.
\ee
The eigenvalues $\lambda$ and eigenvectors $V_{\lambda}$ are numerically computed for each $\vv{q}\in\Rm^2$. In the present paper, on the other hand, the form (\ref{modes:separatedI}) was assumed. The explicit form of the eigenvectors is obtained and we can compute eigenvalues $\nu$ by diagonalizing a matrix once.

In Sec.~\ref{num}, we computed the energy density. For the computation of the specific intensity itself, the post-processing procedure \cite{Chandrasekhar50,Siewert00}, which was proposed in the original ADO is expected to help obtaining stable numerical solutions.

In this paper, the pencil beam (\ref{source1}) was assumed. For a general source function $f(\vv{r},\uv)$, the solution $I(\vv{r},\uv)$ to (\ref{rte0}) is given by
\be
I(\vv{r},\uv)=\int_{\Sm^2}\int_{\Rm^3}G(\vv{r},\uv;\vv{r}',\uv')f(\vv{r}',\uv')\,d\vv{r}'d\uv',
\ee
where $G(\vv{r},\uv;\vv{r}',\uv')$ ($\vv{r}'={^t}(\vv{\rho}',z')$) is the solution of (\ref{rte0}) for the source term $\delta(\vv{\rho}-\vv{\rho}')\delta(z-z')\delta(\uv-\uv')$. For any $\uv_0$, we can obtain $G(\vv{r},\uv;\vv{r}',\uv_0)$ simply by shifting $\vv{\rho},z$ by $\vv{\rho}',z'$ in the solution $I(\vv{r},\uv)$ to (\ref{rte0}) for the source term (\ref{source1}). Moreover, the solution to (\ref{rte0}) can be computed without the decomposition (\ref{Idecomp}) if the source term is smooth. In this alternative approach for smooth $f$, the solution to (\ref{rte0}) is obtained as $I_s(\vv{r},\uv)$ in (\ref{subt:rte}) in which $S(\vv{r},\uv)$ is replaced by $f(\vv{r},\uv)$. Accordingly, $\widetilde{S}$ in (\ref{eqIs2}) is replaced by the Fourier transform of $f$. Then the calculation in Sec.~\ref{Ispart} is modified in a straightforward way.


\section*{Disclosure statement}

No potential conflict of interest was reported by the authors.

\section*{Funding}

This work was supported by JSPS KAKENHI Grant Number JP17K05572 and JST PRESTO Grant Number JPMJPR2027.

\appendix

\section{Fundamental solution}
\label{green}

Let us obtain $G_{\vv{q}}(z,\uv_i;z',\uv_{i_0})$ which satisfies (\ref{GreenRTE}). Since $G_{\vv{q}}\to0$ as $z\to\pm\infty$, we can write
\ba
&
G_{\vv{q}}(z,\uv_i;z',\uv_{i_0})
\\
&=
\left\{\begin{aligned}
&
\sum_{m=-l_{\rm max}}^{l_{\rm max}}\sum_{n=1}^NA_n^m
\rrf{\uvk(\nu_n^m,\vv{q})}\Phi_{\nu_n^m}^m(\uv_i)e^{-\hat{k}_z(\nu_n^mq)(z-z')/\nu_n^m},
\\
&\qquad z>z',
\\
&
\sum_{m=-l_{\rm max}}^{l_{\rm max}}\sum_{n=1}^NB_n^m
\rrf{\uvk(-\nu_n^m,\vv{q})}\Phi_{-\nu_n^m}^m(\uv_i)e^{\hat{k}_z(\nu_n^mq_0)(z-z')/\nu_n^m},
\\
&\qquad z<z',
\end{aligned}\right.
\ea
where coefficients $A_n^m=A_n^m(\vv{q},\uv_{i_0})$, $B_n^m=B_n^m(\vv{q},\uv_{i_0})$ will be determined from the source term. With the source term, the jump condition is written as
\ba
&
\mu_i\left[G_{\vv{q}}(z'+0,\uv_i;z',\uv_{i_0})-G_{\vv{q}}(z'-0,\uv_i;z',\uv_{i_0})\right]
\\
&=
\delta_{ii_0}\delta(\va-\va_0).
\ea
Hence,
\ba
&
\mu_i\sum_{m=-l_{\rm max}}^{l_{\rm max}}\sum_{n=1}^N
\\
&
\left[A_n^m\rrf{\uvk(\nu_n^m,\vv{q})}\Phi_{\nu_n^m}^m(\uv_i)-B_n^m\rrf{\uvk(-\nu_n^m,\vv{q})}\Phi_{-\nu_n^m}^m(\uv_i)\right]
\\
&=
\delta_{ii_0}\delta(\va-\va_0).
\ea
Using the orthogonality relation in Lemma (\ref{modes:lem:orth}), we obtain
\ba
A_n^m&=
\frac{w_{i_0}}{\mathcal{N}(\nu_n^m,q)}\rrf{\uvk(\nu_n^m,\vv{q})}\Phi_{\nu_n^m}^{m*}(\uv_{i_0}),
\\
B_n^m&=
\frac{-w_{i_0}}{\mathcal{N}(-\nu_n^m,q)}\rrf{\uvk(-\nu_n^m,\vv{q})}\Phi_{-\nu_n^m}^{m*}(\uv_{i_0}).
\ea
Thus we arrive at (\ref{gfunc1}).

\end{document}